\newtheorem{thm}{Theorem}[section]
\newcommand{\R}{{\rm I}\kern-0.18em{\rm R}}
\newcommand{\1}{{\rm 1}\kern-0.25em{\rm I}}
\newcommand{\E}{{\rm I}\kern-0.18em{\rm E}}
\newcommand{\p}{{\rm I}\kern-0.18em{\rm P}}
\def\@fnsymbol#1{\ensuremath{\ifcase#1\or a\or b\or c\or d\or \e\or f\or *\dagger 	\or \ddagger\ddagger \else\@ctrerr\fi}}
\title{Independence of Linear Statistics with Random Coefficients and Characterizations of Geometric and Poisson Distributions}
\author{Lev B. Klebanov\footnote{Department of Probability and Mathematical Statistics, Charles University, Prague, Czech Republic. e-mail: lev.klebanov@mff.cuni.cz}}
\date{}
\begin{document}
\maketitle

\begin{abstract}
There is given a characterization of the geometric distribution by the independence of linear forms with random coefficients. The result is a discrete analog of the corresponding theorem on exponential distribution. The property of linear statistics independence is also a characterization of Poisson law.

\vspace{0.3cm}
\noindent
{\bf Key words: geometric distribution; exponential distribution; Poisson distribution; linear forms; random coefficients.}
\end{abstract}

\section{Introduction}\label{sec1}
\setcounter{equation}{0} 
Many different characterizations of the distribution are known (see \cite{KLR}). An essential part of them is connected to the characterizations property of independence of statistics, especially by that of linear forms. The main result here is the linear forms are independent for Gaussian distribution only. The property allows explaining the appearance of Maxwell distribution in Physics. Recently, there were published facts on the independence of linear forms with random coefficients (see \cite{K1, KV}). They lead to the characterization of exponential and hyperbolic secant distributions. Unfortunately, it was not clear if it is possible to use the independence property to characterize some discrete distributions.  Here we modify previous methods to obtain characterizations of geometric and Poisson distribution. We hope the methods will lead to characterizations of other discrete distributions. As far as we know, there are only a few characterizations of the distributions of positive integer-valued random variables. Among them let us note the characteristic properties of geometric distribution by the independence of linear forms of order statistics \cite{GK}. However, the structure of the forms used here is absolutely different.

\section{Main results}\label{sec1} 
\setcounter{equation}{0} 

In the paper, \cite{KV} there was given a characterization of exponential distribution by the independence of linear forms
\[ S_1= (1-p) aX+\varepsilon(p) a Y \quad \text{and} \quad S_2=pbX+(1-\varepsilon(p))bY. \]
Here $X$ and $Y$ are independent identically distributed (i.i.d) positive random variables, $p\in (0,1)$, and $\varepsilon(p)$ is independent of the pair $(X,Y)$ Bernoulli random variable with the parameter $p$. Below we give a similar result for a geometric random variable. However, the multiplication of $X$ by $p$ or by $1-p$ leads out the class of integer-valued random variables. Therefore, we have to use a thinning operator instead of multiplication. 

Let us give a precise formulation. Suppose $X$ and $Y$ are i.i.d. random variables taking non-negative integer values. Suppose that $p\in (0,1)$ is fixed, and $\varepsilon_p$ is independent of the pair $(X,Y)$ Bernoulli random variable with the parameter $p$. Let $\{\varepsilon_j(1-p),\; j=1,2,\ldots\}$ be a sequence of i.i.d. Bernoulli random variables with parameter $1-p$ and independent on $X,Y,\varepsilon(p)$. Define the forms 
\begin{equation}\label{eq1}
L_1=\widetilde{X}_{1-p}+\varepsilon(p)Y \quad \text{and}\quad L_2=\widetilde{X}_{p}+\bigl(1-\varepsilon(p)\bigr)Y.
\end{equation}
Here $\widetilde{X}_{1-p}=\sum_{j=1}^{X}\varepsilon_j(1-p)$ and $\widetilde{X}_{p}=\sum_{j=1}^{X}\bigl(1-\varepsilon_j(1-p)\bigr)$.

\begin{thm}\label{th1}
The forms $L_1$ and $L_2$ defined above are stochastical independent if and only if $X$ has a geometric distribution.
\end{thm}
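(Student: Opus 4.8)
The plan is to translate the independence of $L_1$ and $L_2$ into a functional equation for the common probability generating function $P(z)=\E z^X=\E z^Y$ and then to reduce that equation to an elementary ordinary differential equation. First I would compute the joint generating function. Since $\widetilde{X}_{1-p}+\widetilde{X}_{p}=X$ and since $X,Y,\varepsilon(p)$ and the sequence $\{\varepsilon_j(1-p)\}$ are mutually independent, conditioning first on $X$ (each factor $u^{\varepsilon_j(1-p)}v^{1-\varepsilon_j(1-p)}$ has expectation $(1-p)u+pv$) and then on $\varepsilon(p)$ yields
\[
\E\bigl[u^{L_1}v^{L_2}\bigr]=P\bigl((1-p)u+pv\bigr)\bigl(pP(u)+(1-p)P(v)\bigr),
\]
and the marginals $\E[u^{L_1}]=P((1-p)u+p)\bigl(pP(u)+(1-p)\bigr)$ and $\E[v^{L_2}]=P((1-p)+pv)\bigl(p+(1-p)P(v)\bigr)$ follow by putting $v=1$, resp. $u=1$. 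Independence of $L_1$ and $L_2$ is therefore equivalent to the identity $\E[u^{L_1}v^{L_2}]=\E[u^{L_1}]\,\E[v^{L_2}]$ for all $u,v\in[0,1]$, an analytic identity on the square $(0,1)^2$.

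The key algebraic move is to put $S(z)=1/P(z)$, which is legitimate since $P$ is strictly positive and real-analytic on $(0,1)$, with $S(1)=1$. Because $p+(1-p)=1$, the two-term factors collapse: $p/S(u)+(1-p)/S(v)=\bigl((1-p)S(u)+pS(v)\bigr)/(S(u)S(v))$, $p/S(u)+(1-p)=\bigl(p+(1-p)S(u)\bigr)/S(u)$, and so on, so that after cancelling the common factor $1/(S(u)S(v))$ the functional equation becomes
\[
\frac{(1-p)S(u)+pS(v)}{S\bigl((1-p)u+pv\bigr)}=\frac{\bigl(p+(1-p)S(u)\bigr)\bigl((1-p)+pS(v)\bigr)}{S\bigl((1-p)u+p\bigr)\,S\bigl((1-p)+pv\bigr)}.
\]
Taking logarithms and differentiating in $u$ and then in $v$ annihilates the whole right-hand side (each of its four terms depends on $u$ alone or on $v$ alone), and with $\sigma:=\log S$ one is left with
\[
\sigma''\bigl((1-p)u+pv\bigr)=\frac{-\,S'(u)\,S'(v)}{\bigl((1-p)S(u)+pS(v)\bigr)^{2}},\qquad u,v\in(0,1).
\]

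Setting $v=u$ is the decisive step: the left argument becomes $u$, the denominator on the right becomes $S(u)^2$, and since $S'/S=\sigma'$ the relation collapses to the autonomous equation $\sigma''=-(\sigma')^{2}$ on $(0,1)$. Writing $w=\sigma'$ gives $w'=-w^{2}$, so either $w\equiv0$, whence $S\equiv1$, $P\equiv1$ and $X\equiv0$ (the degenerate geometric law), or $w(z)=1/(z+C)$, whence $\sigma(z)=\log\bigl((z+C)/(1+C)\bigr)$ after using $\sigma(1)=0$; that is, $S(z)=(z+C)/(1+C)$ is affine and $P(z)=(1+C)/(z+C)$. The requirement that $P$ be the generating function of a proper $\mathbb{Z}_{\ge0}$-valued random variable (nonnegative Taylor coefficients at $0$, convergence at $z=1$) forces $C<-1$, and with $\theta:=-1/C\in(0,1)$ this is exactly $P(z)=(1-\theta)/(1-\theta z)$, the geometric distribution. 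For the converse, when $P$ has this form $S$ is affine, so $R(z):=S(z)-1=\tfrac{\theta}{1-\theta}(1-z)$ satisfies $R((1-p)u+pv)=(1-p)R(u)+pR(v)$, $R((1-p)u+p)=(1-p)R(u)$ and $R((1-p)+pv)=pR(v)$; substituting $S=1+R$ into the displayed functional equation shows both sides equal $1$, so $L_1$ and $L_2$ are independent. I expect the main obstacle to be precisely the discovery of the substitution $S=1/P$ together with the differentiate-then-set-$v=u$ trick — before that reduction the functional equation looks intractable — and one must also take a little care to argue on the open interval $(0,1)$, since the moments of $X$ need not be finite.
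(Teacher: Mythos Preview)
Your argument is correct, and it differs from the paper's in an interesting way.  Both proofs start from the same joint generating function and the same factorisation identity, but the paper then takes a single derivative in $v$, evaluates at $v=0$, and obtains a relation of the form
\[
p\,\frac{\mathcal P'((1-p)u)}{\mathcal P((1-p)u)}+\frac{(1-p)\mathcal P'(0)}{p\mathcal P(u)+(1-p)\mathcal P(0)}=C .
\]
From this the paper argues indirectly: the $k$th Taylor coefficient of $\mathcal P$ at $0$ is determined by the previous ones, so the solution depends on at most two parameters, one of which is pinned down by $\mathcal P(1)=1$; since the geometric family already provides a one-parameter family of solutions, it must be the only one.  Your route is more explicit.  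The substitution $S=1/\mathcal P$ (motivated by the fact that the reciprocal of the geometric p.g.f.\ is affine) turns the awkward sums $p\mathcal P(u)+(1-p)\mathcal P(v)$ into numerators that are affine in $S(u),S(v)$; taking the mixed partial $\partial_u\partial_v$ of the logarithm kills the right-hand side outright and, after restricting to the diagonal $u=v$, collapses the whole problem to the autonomous ODE $\sigma''=-(\sigma')^{2}$, which you solve in closed form.  Your approach thus replaces the paper's soft uniqueness argument by an explicit integration, at the cost of the extra algebraic idea $S=1/\mathcal P$; it also sidesteps the need to show $\mathcal P(0)>0$, since everything happens on the open interval $(0,1)$.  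The converse verification via $R(z)=S(z)-1$ is essentially the same as the paper's direct substitution, just organised more transparently.
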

\begin{proof}
Consider joint probability generating function of $L_1$ and $L_2$.
\begin{eqnarray}\label{eq2}
\nonumber
\E \bigl(u^{L_1}v^{L_2}\bigr)=\E \bigl( u^{\widetilde{X}_{1-p}+\varepsilon(p)Y} v^{\widetilde{X}_{p}+(1-\varepsilon(p))Y}\bigr)=\\
\mathcal{P}((1-p)u+pv)\Bigl(p \mathcal{P}(u) + (1-p)\mathcal{P}(v) \Bigr),
\end{eqnarray}
Forms $L_1$, and $L_2$ are independent if and only if the right-hand side of (\ref{eq2}) may be written as a product of two probability generating functions depending on $u$ and $v$ separately.

Let us verify the forms are independent in the case of exponentially distributed $X$. Really, in this case, we have
\[  \mathcal{P}(z) =\frac{a-1}{a-z} \]
for any $a>1$. Substitution of this function on the right-hand side of (\ref{eq2}) gives us the product
\[ \frac{(a-1)^2}{(a-u)(a-v)}.  \]
It means the forms are independent for exponentially distributed $X$.

Let us prove the inverse statement. Independence of $L_1$ and $L_2$ holds if and only if their joint probability generating function is a product of corresponding marginal generating functions:
\begin{eqnarray}\label{eq3}
\mathcal{P}\bigl((1-p)u+pv\bigr)\Bigl(p \mathcal{P}(u) + (1-p)\mathcal{P}(v) \Bigr)=\nonumber\\ \mathcal{P}\bigl((1-p)u+p\bigr)\Bigl(p \mathcal{P}(u) + (1-p) \Bigr)\mathcal{P}\bigl((1-p)+pv\bigr)\Bigl(p + (1-p)\mathcal{P}(v) \Bigr). 
\end{eqnarray}
The relation (\ref{eq3}) holds for all $u$ and $v$ such that $|u|\leq 1$, $|v|\leq 1$. 

Setting here $u=v=0$ we obtain
\[  \mathcal{P}^2(0) = \mathcal{P}(p) \Bigl( \mathcal{P}(0)p+1-p\Bigr) \mathcal{P}(1-p)\Bigl(p+(1-p)\mathcal{P}(0) \Bigr).\]
However, $p\in (0,1)$ and from  the definition of probability generating function we see $\mathcal{P}(p) >0$ and $\mathcal{P}(1-p) >0$. Therefore $\mathcal{P}(0) >0$. Because any probability generating function is analytic in the disc $|z|\leq 1$ on the complex plain $\mathcal{P}(z)$ is uniquely defined by the sequence of its derivatives at point $z=0$.

Rewrite (\ref{eq3}) in the form
\begin{equation}\label{eq4}
\mathcal{P}\bigl((1-p)u+pv\bigr)\Bigl(p \mathcal{P}(u) + (1-p)\mathcal{P}(v) \Bigr)=H_1(u)H_2(v).
\end{equation}
Taking the logarithm and differentiating with respect to $v$ both sides of (\ref{eq4}), we obtain at the point $v=0$ that
\begin{equation}\label{eq5}
p\frac{\mathcal{P}^{\prime}\bigl((1-p)u\bigr)}{\mathcal{P}\bigl((1-p)u\bigr)}+\frac{(1-p)\mathcal{P}^{\prime}(0)}{p\mathcal{P}(u)+(1-p)\mathcal{P}(0)}=C,
\end{equation}
where $C=H_2^{\prime}(0)/H_2(0)$. From (\ref{eq5}) follow two facts: 
\begin{enumerate}
\item The value $\mathcal{P}^{\prime}(0)$ may be arbitrary;
\item For any $k>1$ the value of $\mathcal{P}^{k}(0)$ is uniquely determined by previous values  $\mathcal{P}^{k-1}(0), \mathcal{P}^{k-2}(0), \ldots , \mathcal{P}(0)$ that is by two parameters $\mathcal{P}(0)$ and $\mathcal{P}^{\prime}(0)$.
\end{enumerate} 
However, one of these parameters is fixed in view of the condition 
\[  \sum_{k=0}^{\infty}\frac{\mathcal{P}^{k}(0)}{k!} = 1\]
and the solution of (\ref{eq2}) may depend on one parameter only. Such solution is $\mathcal{P}(z) =(a-1)/(a-z)$.
\end{proof}

Let us now proceed to the characterization of Poisson distribution. Let $X,Y$ are i.i.d. non-negative integer-valued random variables. Suppose that
$\{\varepsilon_j(p),\; j=1,2,\ldots\}$ and $\{\widetilde\varepsilon_j(q),\; j=1,2,\ldots\}$ are independent with each other and with the pair $(X,Y)$ sequences of Bernoulli random variables with parameters $p,q \in (0,1)$. Consider linear forms $K_1$ and $K_2$ of $X$ and $Y$:
\begin{eqnarray}\label{eq6}
\widetilde{X}_{1-p}=\sum_{j=1}^{X}\varepsilon_j(1-p), \quad \widetilde{X}_{q}=\sum_{j=1}^{X}\bigl(1-\widetilde\varepsilon_j(1-q)\bigr) \nonumber\\
\widetilde{Y}_{1-q}=\sum_{j=1}^{Y}\widetilde\varepsilon_j(1-q), \quad \widetilde{Y}_{p}=\sum_{j=1}^{Y}\bigl(1-\varepsilon_j(1-p)\bigr) \nonumber\\
K_1=\widetilde{X}_{1-p}+\widetilde{Y}_{1-q}, \quad K_2=\widetilde{X}_{q}+\widetilde{Y}_{p}.
\end{eqnarray}
\begin{thm}\label{th2}
Forms (\ref{eq6}) are independent if and only if $X$ has Poisson distribution.
\end{thm}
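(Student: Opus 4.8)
The plan is to follow the proof of Theorem~\ref{th1} closely, the only structural change being that the Bernoulli routing variable $\varepsilon(p)$ of that theorem is replaced here by a second thinning. First I would record the joint probability generating function of $K_1$ and $K_2$. Reading (\ref{eq6}) in the same spirit as (\ref{eq2}): the coins attached to $X$ cut it into the part feeding $K_1$ and the complementary part feeding $K_2$, which contributes the factor $\mathcal{P}\bigl((1-p)u+pv\bigr)$ exactly as in (\ref{eq2}); the same analysis applied to $Y$ contributes $\mathcal{P}\bigl((1-q)u+qv\bigr)$; and since the $X$-block and the $Y$-block are functions of disjoint collections of the underlying variables,
\[
\E\bigl(u^{K_1}v^{K_2}\bigr)=\mathcal{P}\bigl((1-p)u+pv\bigr)\,\mathcal{P}\bigl((1-q)u+qv\bigr).
\]
For the ``if'' direction I would substitute the Poisson generating function $\mathcal{P}(z)=e^{\lambda(z-1)}$; the right-hand side collapses to $\exp\{\lambda[(2-p-q)u+(p+q)v-2]\}=e^{\lambda(2-p-q)(u-1)}\,e^{\lambda(p+q)(v-1)}$, a product of a generating function in $u$ alone and one in $v$ alone, so $K_1$ and $K_2$ are independent (in fact Poisson with parameters $\lambda(2-p-q)$ and $\lambda(p+q)$).

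For the converse, independence means the displayed product equals $H_1(u)H_2(v)$ with $H_1,H_2$ the marginal generating functions; putting $u=v=0$ gives, just as in the excerpt, $\mathcal{P}(0)^2=\mathcal{P}(p)\mathcal{P}(q)\mathcal{P}(1-p)\mathcal{P}(1-q)>0$, whence $\mathcal{P}(0)>0$ and $L:=\log\mathcal{P}$ is analytic on a neighbourhood of $[0,1]$. Taking logarithms gives $L\bigl((1-p)u+pv\bigr)+L\bigl((1-q)u+qv\bigr)=\log H_1(u)+\log H_2(v)$, and differentiating in $v$ at $v=0$ yields the analogue of (\ref{eq5}),
\[
p\,\frac{\mathcal{P}'\bigl((1-p)u\bigr)}{\mathcal{P}\bigl((1-p)u\bigr)}+q\,\frac{\mathcal{P}'\bigl((1-q)u\bigr)}{\mathcal{P}\bigl((1-q)u\bigr)}=C
\]
for a constant $C$. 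Writing the analytic function $\mathcal{P}'/\mathcal{P}=L'$ near $0$ as $\sum_{k\ge0}c_k z^k$, the left-hand side is $\sum_{k\ge0}\bigl(p(1-p)^k+q(1-q)^k\bigr)c_k u^k$; since $p(1-p)^k+q(1-q)^k>0$ for every $k$, equating with the constant $C$ forces $c_k=0$ for all $k\ge1$. Hence $L'\equiv c_0$, so $\mathcal{P}(z)=e^{c_0 z+d}$, and the normalizations $\mathcal{P}(1)=1$ and $\mathcal{P}(0)\in(0,1]$ give $d=-c_0$ and $c_0\ge0$, i.e. $\mathcal{P}(z)=e^{c_0(z-1)}$ — the Poisson generating function.

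The step I expect to be the real obstacle is the first one: verifying rigorously that in (\ref{eq6}) the variable $X$ truly splits into a $(1-p)$-thinning and its \emph{exact} complement (and $Y$ into a $(1-q)$-thinning and its complement), so that the $X$- and $Y$-blocks separate and each one reduces to the computation already performed in (\ref{eq2}), giving $\mathcal{P}\bigl((1-p)u+pv\bigr)$ and $\mathcal{P}\bigl((1-q)u+qv\bigr)$ respectively; if the two thinnings of $X$ were instead driven by independent coins the blocks would not factor and the Poisson law would fail, so this bookkeeping is precisely where the structure of the forms is used. Once the factorization is in hand the analytic part is a routine, indeed simpler, variant of the argument for Theorem~\ref{th1}; the one genuinely new ingredient is the strict positivity of $p(1-p)^k+q(1-q)^k$, which is what singles out the degree-one exponent (Poisson) in place of the rational solution $(a-1)/(a-z)$ (geometric) of Theorem~\ref{th1}. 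Finally, the case $p=q$, where $(1-p)u+pv$ and $(1-q)u+qv$ coincide, is a trivial special case of the coefficient-matching step.
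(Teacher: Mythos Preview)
Your proof is correct and follows exactly the route the paper takes: compute the joint generating function (\ref{eq7}), check the Poisson case by substitution, and for the converse take logarithms and differentiate in $v$ at $v=0$ to obtain the analogue of (\ref{eq5}). The paper actually omits the converse, pointing back to Theorem~\ref{th1}; your explicit Taylor-coefficient argument (using $p(1-p)^k+q(1-q)^k>0$ for every $k$ to force $L'\equiv c_0$, hence $\mathcal{P}(z)=e^{c_0(z-1)}$) is a clean execution of what the paper leaves implicit.
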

\begin{proof}
The joint probability generating function of $K_1$ and $K_2$ is
\begin{eqnarray}\label{eq7}
\E (u^{K_1}v^{K_2}) = \E \Bigl(u^{\widetilde{X}_{1-p}}v^{\widetilde{Y}_{p}} u^{\widetilde{X}_{q}}v^{\widetilde{Y}_{1-q}}\Bigr)
=\nonumber\\ 
\mathcal{P}((1-p)u+pv)\mathcal{P}((1-q)u+qv).
\end{eqnarray}
Probability generating function of Poisson distribution has form 
\[ \mathcal{P}_o(z) = \exp\{\lambda(z-1) \}. \]
Substituting this function on the right-hand side of (\ref{eq7}) leads to a product of functions depending on $u$ and $v$ separately.

The rest of the proof is similar to that of the Theorem \ref{th1}; we omit it.
\end{proof}

\section{Conclusions} 

Characterization of distributions by the property of independent statistics is an interesting part of Probability. Basically, it is purely theoretical interest. However, characterizations of distributions can also be useful in statistical hypotheses testing (see \cite{N}). Obtained here characterizations use rather simple statistics and there is a hope they may also be used for the construction of statistical tests.

\section*{\small{ACKNOWLEDGEMENT}}

The work was partially supported by Grant GA\v{C}R 19-28231X.

\end{document}